\newtheorem{theorem}{Theorem}[section]
\newtheorem{lemma}{Lemma}[section]
\newtheorem{corollary}{Corollary}[section]
\theoremstyle{remark}
\newtheorem{remark}{Remark}[section]
\newcommand{\teich}{\mathcal{T}}
\newcommand{\Bers}[1]{\mathcal{T}^B_{#1}}
\newcommand{\ve}{V_E}
\begin{document}
%\frontmatter

\title[Bergman kernel]{Estimates of the Bergman kernel on Teichm\"uller space}
\author{Guangming Hu and Hideki Miyachi}

\date{\today}
\address{
Guangming Hu,
College of Science,
Jinling Institute of Technology,
Nanjing, 211169, P.R. China.
}
\email{18810692738@163.com}
\thanks{The first author is partially supported by PhD research startup foundation of Jinling Institute Technology Grant Number  jit-b-202011 and jit-fhxm-2018}
\address{Hideki Miyachi,
School of Mathematics and Physics,
College of Science and Engineering,
Kanazawa University,
Kakuma-machi, Kanazawa,
Ishikawa, 920-1192, Japan
}
\email{miyachi@se.kanazawa-u.ac.jp}
\thanks{The second author is partially supported by JSPS KAKENHI Grant Numbers
20H01800,
16H03933,
17H02843}
\subjclass[2020]{Primary 32G05, 32G15, 32U35, 32A25, 32F45, Secondary 53C60}
\keywords{Teichm\"uller space, Teichm\"uller distance, Pluripotential theory, Bergman kernel}

\begin{abstract}
In this paper, a comparison between the Bergman kernel form and the pushforward measure of the Masur-Veech meaure on the Teichm\"uller space of genus $g\ge 2$ is obtained.
\end{abstract}

\maketitle
%\tableofcontents

\section{Introduction}
\subsection{Background and Main theorem}
The Teichm\"uller space  $\teich_g$ of genus $g\ge 2$ is realized as a bounded domain in $\mathbb{C}^{3g-3}$ (cf. \cite{MR0130972}). In view of complex analysis, there is an important invariant $K_{\teich_g}$, the \emph{Bergman kernel form}, on $\teich_{g}$, which is defined from the reproducing kernel of the space of $L^2$-holomorphic $(3g-3)$-forms on $\teich_g$ (cf.  \cite{MR112162} and \cite{MR3887636}. See also \S\ref{subsec:Bergman_kernel}). From the transformation law, the Bergman kernel form is regarded as a non-negative $(6g-6)$-form on $\teich_g$ (cf. \eqref{eq:fundamental_eq2}).

The Teichm\"uller geodesic flow on the unit cotangent bundle $\mathcal{U}\teich_{g}$ on $\teich_{g}$ (in terms of the Teichm\"uller metric) admits a natural invariant measure, named the \emph{Masur-Veech measure} (cf. \cite{MR644018} and \cite{MR644019}). By descending the Masur-Veech measure to $\teich_g$ via the natural  projection $\mathcal{U}\teich_{g}\to \teich_g$, we get a measure $\mathbf{m}_g$ on $\teich_g$ which is invariant under the action of the mapping class group (cf. \cite{MR2913101}).

In this short paper, we shall show the following global comparison.

%\subsection{Main theorem}
\begin{theorem}[Bergman kernel form and Masur-Veech measure]
\label{thm:main}
There are two positive constants $C_1$, $C_2$ depending only on $g$ such that
\begin{equation}
\label{eq:main}
C_1\,d\mathbf{m}_g\le
%\dfrac{i^{(3g-3)^2}}{2^{3g-3}}
K_{\teich_{g}}
\le C_2\,d\mathbf{m}_g
\end{equation}
on $\teich_{g}$.
\end{theorem}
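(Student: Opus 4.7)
My plan is to express both sides of \eqref{eq:main} at an arbitrary point $X \in \teich_g$ in terms of natural volumes of unit balls in the cotangent fiber $Q(X) = T^*_X \teich_g$ of holomorphic quadratic differentials. Both $K_{\teich_g}$ and $d\mathbf{m}_g$ are mapping-class-group-invariant positive $(6g-6)$-forms on $\teich_g$, so their ratio descends to a continuous positive function on $\mathcal{M}_g$, and \eqref{eq:main} reduces to bounding this function above and below by constants depending only on $g$.

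To handle $d\mathbf{m}_g$, I would unwind the construction through period coordinates on the principal stratum of $Q\teich_g$, where the Masur-Veech measure is locally translation-invariant Lebesgue. Disintegrating along the projection $\pi \colon Q\teich_g \to \teich_g$ and restricting to $\{q : \|q\|_1 \le 1\}$ yields
\[
d\mathbf{m}_g(X) \;=\; c_g\,\mathrm{HM}(X)\,d\lambda(X),
\]
where $\mathrm{HM}(X) := \mathrm{vol}_{\mathrm{per}}\{q \in Q(X) : \|q\|_1 \le 1\}$ is the area-norm Hubbard--Masur function, $d\lambda$ is the top form on $\teich_g$ at $X$ induced via the cotangent-tangent pairing, and $c_g$ is a dimensional constant. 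Equivalently, $d\mathbf{m}_g$ is proportional to the Holmes--Thompson volume $d\mu_{HT}$ of the Teichm\"uller Finsler metric, whose cotangent Finsler norm on $Q(X)$ is exactly $\|\cdot\|_1$.

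To handle $K_{\teich_g}$, I would show $K_{\teich_g} \asymp d\mu_{HT}$ by combining: (a) Royden's identification of the Teichm\"uller and Kobayashi metrics on $\teich_g$, which makes $d\mu_{HT}$ coincide with the Kobayashi--Eisenman pseudo-volume up to a dimensional constant; (b) the classical comparison of the Bergman kernel with the Kobayashi--Eisenman pseudo-volume on bounded hyperconvex domains via $L^2$-extension of Ohsawa--Takegoshi type and Bergman monotonicity; (c) the hyperconvexity of $\teich_g$ (Krushkal, Yeung) together with the uniform local geometry supplied by Nehari-type estimates for the Bers embedding centered at $X$. Chaining (a)--(c) gives $K_{\teich_g}(X) \asymp d\mu_{HT}(X)$ with constants depending only on $g$, and combining with the previous paragraph yields \eqref{eq:main}.

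The principal obstacle I anticipate is the \emph{uniform} quantitative comparison $K_{\teich_g} \asymp d\mu_{HT}$ across all of $\teich_g$: although the Bergman/Kobayashi--Eisenman comparison on an individual bounded pseudoconvex domain is classical, arranging that the constants depend only on $g$ (and not on the point $X$, which may approach the Deligne--Mumford boundary of $\mathcal{M}_g$) requires a careful pairing of Bergman-kernel monotonicity on sub-level sets of a bounded plurisubharmonic exhaustion with uniform extremal-envelope estimates for the intrinsic metric. The Masur-Veech identification with $d\mu_{HT}$, while essentially a bookkeeping matter, also demands delicate normalization in period coordinates and control of the contribution of non-principal strata.
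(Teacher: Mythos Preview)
Your outline parallels the paper's in its two-step shape (Bergman $\asymp$ Finsler volume, then Finsler volume $\asymp d\mathbf{m}_g$ via Dowdall--Duchin--Masur), but the middle link is organized differently and your version has a gap. The paper compares $K_{\teich_g}$ not with the Holmes--Thompson volume but with the \emph{Busemann} volume $d\mu_{\teich_g;B}$ of the Teichm\"uller (=Kobayashi) metric, and does so by a direct pluripotential argument: B{\l}ocki's inequality $K_\Omega(z)\ge \bigl(e^{2Na}V_E\{g_\Omega(z,\cdot)<-a\}\bigr)^{-1}$ combined with the identity $g_{\teich_g}(x,y)=\log\tanh d_T(x,y)$ gives, after letting $a\to\infty$, that $K_{\teich_g}\ge V_E(\mathcal{I}_T(x_0))^{-1}dV_E$; the upper bound comes from Bergman monotonicity applied to the inclusion $(1/3)\mathcal{I}_T(x_0)\subset\Bers{x_0}$ (a consequence of Nehari's theorem) and the explicit Bergman kernel at the center of a balanced convex domain. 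Your step (a), asserting that Royden's theorem makes $d\mu_{HT}$ coincide with the Kobayashi--Eisenman pseudo-volume up to a dimensional constant, is not correct as stated: the Kobayashi--Eisenman volume is defined via extremal polydisk/ball maps, not via the unit (co)ball of the infinitesimal Kobayashi metric, and in general it differs from both the Busemann and Holmes--Thompson volumes of that metric by a non-constant factor. You could repair this by inserting a John-ellipsoid comparison, but then you are back to the Busemann volume and the paper's route anyway.

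Your stated ``principal obstacle''---uniformity of the Bergman/Finsler-volume comparison as $X$ degenerates---is exactly what the paper's argument dissolves, and it is worth seeing how. One does \emph{not} work in a fixed model and worry about the boundary; instead one re-centers the Bers embedding at each $x_0$. The Ahlfors--Weill section and Nehari--Kraus bounds give the universal sandwich $B^\infty_{2;x_0}\subset H_{x_0}^{-1}(\mathcal{I}_T(x_0))\subset B^\infty_{6;x_0}$ and $\Bers{x_0}\subset B^\infty_{6;x_0}$, so all constants in the Bergman/Busemann comparison are absolute (indeed explicit: $1$ and $3^{6g-6+2m}$). The dependence on $g$ enters only through Dowdall--Duchin--Masur's comparison of $d\mathbf{m}_g$ with $d\mu_{\teich_g;B}$, not through the complex-analytic step.
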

Notice that the inequality \eqref{eq:main} is understood as comparisons of non-negative $(6g-6)$-forms on $\teich_{g}$ (cf. \eqref{eq:fundamental_eq2}).
We will also prove that the Bergman kernel form on $\teich_g$ is comparable with the $(6g-6)$-dimensional Hausdorff measure with respect to the Teichm\"uller distance (cf. Corollary \ref{coro:hausdorff_measure}). Our comparisons give global geometric informations of the Bergman kernel form in view of the Teichm\"uller theory.

As a prior work, B. Chen \cite{MR2106264} studied the asymptotic behavior of the Bergman kernel function at the Bers boundary. However, to the authors' knowledge, there are less informations on the Bergman kernel form on the Teichm\"uller space. On the other hand, there is an enormous amount of studies of the Bergman kernel in the function theory of several complex variables, and many applications to several fields in mathematics. The authors believe that the study of the Bergman kernel on the Teichm\"uller space is important for developing the complex analytical aspect in the Teichm\"uller theory.

\subsection{Two corollaries}
%Let $d_T$ be the Teichm\"uller distance on $\teich_g$ (cf. \S\ref{subsec:teich_distance}).
For $x\in \teich_g$, let 
$B_T(x,R)$
%$B_T(x,R)=\{y\in \teich_g\mid d_T(x,y)<R\}$ 
be the open $R$-ball of the center at $x$ with respect to the Teichm\"uller distance.
Applying \cite[Theorem 1.3]{MR2913101}, we obtain
\begin{corollary}[Volume estimate]
\label{coro:volume-estimate}
There are two positive constants $D_1$, $D_2$ depending only on $g\ge 2$ such that
\begin{align*}
D_1&\le \liminf_{R\to \infty}e^{-(3g-3)R}\int_{B_T(x,R)}K_{\teich_g}
\le
\limsup_{R\to \infty}e^{-(3g-3)R}\int_{B_T(x,R)}K_{\teich_g}\le D_2
%\dfrac{i^{(3g-3)^2}}{2^{3g-3}}
%\int_{B_T(x_0,R)}K_{\teich_g}\asymp e^{(3g-3)R}
%$$
\end{align*}
for any $x\in \teich_g$.
%where $B_T(x_0,R)=\{x\in \teich_g\mid d_T(x_0,x)<R\}$ and $d_T$ is the Teichm\"uller distance on $\teich_g$.
\end{corollary}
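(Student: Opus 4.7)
My plan is to derive Corollary~\ref{coro:volume-estimate} as an immediate consequence of Theorem~\ref{thm:main} combined with the volume-growth asymptotic \cite[Theorem~1.3]{MR2913101}. The argument is purely formal once those two ingredients are in place; no substantive new analysis is required.

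The first step is to observe that the inequality \eqref{eq:main} is a comparison of non-negative $(6g-6)$-forms on $\teich_g$, hence can be integrated against the indicator function of any Borel set. Integrating over the Teichm\"uller ball $B_T(x, R)$ produces
\[
C_1\, \mathbf{m}_g(B_T(x, R)) \;\le\; \int_{B_T(x, R)} K_{\teich_g} \;\le\; C_2\, \mathbf{m}_g(B_T(x, R))
\]
for every $R > 0$ and every $x \in \teich_g$, with $C_1, C_2$ the constants furnished by Theorem~\ref{thm:main}.

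The second step is to divide this display by $e^{(3g-3)R}$ and pass to $\liminf$ and $\limsup$ as $R \to \infty$. In the normalization used here, \cite[Theorem~1.3]{MR2913101} supplies positive constants $d_1, d_2$ depending only on $g$ such that
\[
d_1 \;\le\; \liminf_{R \to \infty} e^{-(3g-3)R}\mathbf{m}_g(B_T(x, R)) \;\le\; \limsup_{R \to \infty} e^{-(3g-3)R}\mathbf{m}_g(B_T(x, R)) \;\le\; d_2
\]
for every $x \in \teich_g$. Combining with the integrated form of \eqref{eq:main} immediately yields the corollary with $D_1 := C_1 d_1$ and $D_2 := C_2 d_2$.

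I do not anticipate any genuine obstacle: the form-wise comparison in Theorem~\ref{thm:main} transfers mechanically to the level of integrals, and the lattice-point / volume-growth theorem of Athreya--Bufetov--Eskin--Mirzakhani is applied as a black box. The only point that deserves care is purely conventional, namely verifying that the measure $\mathbf{m}_g$ defined here by descending the Masur--Veech measure along $\mathcal{U}\teich_g \to \teich_g$ coincides (up to a normalization depending only on $g$) with the measure for which \cite[Theorem~1.3]{MR2913101} produces the stated exponential growth; once that identification is made, the $\liminf$/$\limsup$ estimates transport verbatim.
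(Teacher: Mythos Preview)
Your argument is correct and matches the paper's own derivation exactly: the corollary is stated immediately after Theorem~\ref{thm:main} with nothing more than the phrase ``Applying \cite[Theorem~1.3]{MR2913101}, we obtain,'' which is precisely the combination you spell out. Your added remark about the normalization of $\mathbf{m}_g$ relative to the measure in \cite{MR2913101} is a reasonable point of care but does not go beyond what the paper implicitly assumes.
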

It is an interesting problem to determine whether the limit of $\displaystyle e^{-(3g-3)R}\int_{B_T(x,R)}K_{\teich_g}$ as $R\to \infty$ exists or not, like Athreya,  Bufetov, Eskin, and Mirzakhani's work on $\mathbf{m}_g$ in \cite{MR2913101}.

Since the Bergman kernel form is a biholomorphic invariant, 
%$(i^{(3g-3)^2}/2^{3g-3})K_{\teich_{g}}$ 
$K_{\teich_{g}}$ descends to a non-negative $(6g-6)$-form on the moduli space $\mathcal{M}_g$ of genus $g\ge 2$.
Since the volume of $\mathcal{M}_g$ with respect to $\mathbf{m}_g$ is finite (cf. \cite{MR644018} and \cite{MR2913101}), we obtain

\begin{corollary}[Integral is finite]
\label{coro:finite_volume}
For any $g\ge 2$,
$$
%\dfrac{i^{(3g-3)^2}}{2^{3g-3}}
\int_{\mathcal{M}_g}K_{\teich_g}<\infty.
$$
\end{corollary}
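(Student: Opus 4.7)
The plan is to deduce the corollary directly from the upper bound in Theorem \ref{thm:main} together with the finiteness of the Masur--Veech volume of the moduli space. The only subtlety is passing from the Teichm\"uller space to the moduli space $\mathcal{M}_g$, which has orbifold points; this is harmless for measure-theoretic integration and can be handled via a fundamental domain.

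Concretely, first I would fix a measurable fundamental domain $F\subset \teich_g$ for the action of the mapping class group $\mcg_g$, so that integration over $\mathcal{M}_g$ of any $\mcg_g$-invariant $(6g-6)$-form is by definition the integral over $F$. Since the Bergman kernel form is a biholomorphic invariant of $\teich_g$ and the mapping class group acts on $\teich_g$ by biholomorphisms, $K_{\teich_g}$ is $\mcg_g$-invariant and descends to a well-defined non-negative $(6g-6)$-form on $\mathcal{M}_g$. Similarly, the Masur--Veech measure $\mathbf{m}_g$ descends, as already recalled in the introduction.

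Next, I would apply the upper bound of Theorem \ref{thm:main} on $F$:
\begin{equation*}
\int_{\mathcal{M}_g} K_{\teich_g} \;=\; \int_F K_{\teich_g} \;\le\; C_2 \int_F d\mathbf{m}_g \;=\; C_2\, \mathbf{m}_g(\mathcal{M}_g).
\end{equation*}
The final quantity is finite by the classical theorem of Masur \cite{MR644018} and Veech \cite{MR644019}, which asserts that the Masur--Veech measure on the moduli space has finite total mass. This yields the claim.

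The only potential obstacle is bookkeeping at orbifold points of $\mathcal{M}_g$, but since both $K_{\teich_g}$ and $d\mathbf{m}_g$ are $\mcg_g$-invariant on $\teich_g$, all statements reduce to the integral over a single fundamental domain $F$ and no delicate analysis of the quotient structure is needed. Thus the entire proof is essentially one line, given Theorem \ref{thm:main} and the finiteness of the Masur--Veech volume.
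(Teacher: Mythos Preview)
Your proposal is correct and follows essentially the same one-line argument as the paper: apply the upper bound $K_{\teich_g}\le C_2\,d\mathbf{m}_g$ from Theorem~\ref{thm:main} and invoke the finiteness of the Masur--Veech volume of $\mathcal{M}_g$ (the paper cites \cite{MR644018} and \cite{MR2913101} for this). Your added remarks about fundamental domains and orbifold points are fine but not needed beyond what the paper says.
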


\subsection{About the proof of Theorem \ref{thm:main}}
As discussed in \S\ref{subsec:Masur-Veech_measure},
Theorem \ref{thm:main} follows from a comparison between the Bergman kernel form and the Busemann volume form as given in Theorem \ref{thm:Bergman_indicatrix} and Dowdall-Duchin-Masur's comparisons of several measures on the Teichm\"uller space in \cite{MR3213829}. As remarked after the statement of Theorem \ref{thm:Bergman_indicatrix}, the idea of the proof of Theorem \ref{thm:Bergman_indicatrix} is given by mimiking that of the proof by Z. B{\l}ocki \cite{MR3364678} for convex domains.

Theorem \ref{thm:Bergman_indicatrix} is stated for the Teichm\"uller space $\teich_{g,m}$ of type $(g,m)$ with $2g-2+m>0$. The authors hope that Dowdall-Duchin-Masur's comparisons of volumes also hold for all $(g,m)$ with $2g-2+m>0$, and that the estimate \eqref{eq:main}  for any $(g,m)$ does.

%\begin{theorem}
%Let $x_0=(X_0,f_0)\in \teich_{g,m}$. Then
%$$
%c_\Omega(x_0)=\sup_{\|\mu\|_\infty=1}
%\left\{
%\dfrac{1}{\|d\Phi[\mu]\|_{W\!P}}\sup_{\|q\|=1}\left|\int_{X_0}\mu q\right|
%\right\}???
%$$ 
%where $\Phi$ is the Bers projection and $\|\cdot\|_{W\!P}$ is the Weil-Petersson norm.
%%where $z=x+iy$, $F_0$ is the fundamental domain of $\Gamma_0$ and $X_0=\mathbb{H}/\Gamma_0$.
%\end{theorem}

\section{Teichm\"uller theory}
\subsection{Teichm\"uller space}
The \emph{Teichm\"uller space} $\teich_{g,m}$ of type $(g,m)$
is the equvalence classes
of marked Riemann surfaces of type $(g,m)$. 
A \emph{marked Riemann surface} $(M,f)$ of type $(g,m)$
is a pair of a Riemann surface $M$ of analytically finite type $(g,m)$
and an orientation preserving homeomorphism
$f\colon \Sigma_{g,m}\to M$.
Two marked Riemann surfaces $(M_{1},f_{1})$ and $(M_{2},f_{2})$
of type $(g,m)$ are
\emph{(Teichm\"uller) equivalent} if there is a conformal mapping
$h\colon M_{1}\to M_{2}$ such that $h\circ f_{1}$ is homotopic to $f_{2}$. For simplicity, we write $(M,f)$ the Teichm\"uller equivalence class of the marked Riemann surface $(M,f)$.

Let $x=(M,f)\in \teich_{g,m}$.
Let $L^\infty(M)$ be the space of measurable $(-1,1)$-forms $\mu=\mu(z)d\overline{z}/dz$ on $M$ with
$$
\|\mu\|_\infty={\rm ess.sup}\{|\mu(p)|\mid p\in M\}<\infty.
$$
Let $B_x=\{\mu\in L^\infty(M)\mid \|\mu\|_\infty<1\}$. For $\mu\in B_{x}$, let $f_\mu=f_{\mu;x}$ be the $\mu$-quasiconformal mapping on $M$. We define the \emph{Bers projection} $\Phi_{x}\colon B_{x}\to \teich_{g,m}$ by $\Phi_{x}(\mu)=(f_\mu(M),f_\mu\circ f^{-1})\in \teich_{g,m}$.

For $x=(M,f)\in \teich_{g,m}$,
we denote by $\mathcal{Q}_{x}$ the complex Banach space
of holomorphic quadratic differentials $q=q(z)dz^{2}$
on $M$ with
$$
\|q\|=\int_{M}|q(z)|\,\dfrac{i}{2}dz\wedge d\overline{z}
<\infty,
$$
%where $z=x+iy$.
The holomorphic tangent space $T_x\teich_{g,m}$ at $x\in \teich_{g,m}$ is identified with the quotient space of $L^\infty(M)$ by the equivalence relation, where $\mu_1$, $\mu_2\in L^\infty(M)$ are equivalent if 
$$
\displaystyle\int_M\mu_1 q=\int_M\mu_2 q
$$
for all $q\in\mathcal{Q}_x$. In other words, the kernel of the differenital $(\Phi_{x})_*$ at $x$ consists of $\mu\in L^\infty(M)$ with $\displaystyle\int_{M}\mu q=0$ for all $q\in \mathcal{Q}_x$ (cf. \cite[Theorem 7.6]{MR1215481}). We denote by $[\mu]\in T_{x}\teich_{g,m}$ the tangent vector at $x$ associated to $\mu\in L^\infty(M)$. By definition, $(\Phi_x)_*(\mu)$ represents the equivalence class of $\mu$. Namely,
\begin{equation}
\label{eq:Bers_projection_derivative}
(\Phi_x)_*(\mu)=[\mu]\in T_x\teich_{g,m}
\end{equation}
for $\mu\in L^\infty(M)=T_0 B_x$.
%where $(\Phi_{x})_*\colon L^\infty(M)\to T_{x}\teich_{g,m}$ is the differential of the Bers projection $\Phi_{x}$ at $x$. 

\subsection{Teichm\"uller distance}
\label{subsec:teich_distance}
The \emph{Teichm\"uller distance} $d_T$ is a complete distance
on $\teich_{g,m}$ defined by
$$
d_T(x_1,x_2)=\frac{1}{2}\log \inf_hK(h)
$$
for $x_i=(M_i,f_i)$ ($i=1,2$),
where the infimum runs over all quasiconformal mappings
$h\colon M_1\to M_2$ homotopic to $f_2\circ f_1^{-1}$
and $K(h)$ is the maximal dilatation of $h$.
%
%The \emph{mapping class group} $\mcg_{g,m}$ is 
%the group of homotopy classes of orientation preserving homeomorphisms 
%on $\Sigma_{g,m}$.
%Any element $[\omega]\in \mcg_{g,m}$ acts on $\teich_{g,m}$
%by $[\omega](M,f)=(M,f\circ \omega^{-1})$.
%
%Let $\mathcal{Q}^1_{x}=\{q\in \mathcal{Q}_{x}\mid \|q\|=1\}$ be the unit sphere. 
%Let $x_0=(M_0,f_0)\in \teich_{g,m}$. For $q\in \mathcal{Q}_x-\{0\}$ and $t\in [0,\infty)$, let $f_t$ be the quasiconformal mapping on $M_0$ by the Beltrami differential $\tanh(t)\overline{q}/|q|$. We define the \emph{Teichm\"uller (geodesic) ray $\ray_q\colon [0,\infty)\to \teich_{g,m}$ associated to $q$} by $\ray_q(t)=(f_t(M_0),f_t\circ f_0)$. Teichm\"uller ray is a geodesic ray with respect to $d_T$. Namely, for $t_1$, $t_2\in [0,\infty)$,
%$$
%d_T(\ray_q(t_1),\ray_q(t_2))=|t_1-t_2|.
%$$
The Teichm\"uller distance is known to be a Finsler distance with the Finsler metric
$$
F_x(v)=\sup_{\|q\|=1,q\in \mathcal{Q}_x}\left|\int_M\mu q\right|
$$
for $x=(M,f)\in \teich_{g,m}$, $v=[\mu]\in T_x\teich_{g,m}$ and $\mu\in L^\infty(M)$ (cf. \cite[\S7]{MR903027}). We call $F_x$ the \emph{Teichm\"uller metric}. It is known that the Teichm\"uller metric coincides with the Kobayashi-Royden metric (cf. \cite{MR0288254} and \cite{MR0430319}).

The \emph{Kobayashi-Teichm\"uller indicatrix} $\mathcal{I}_T(x)$ at $x\in \teich_{g,m}$ is defined by
$$
\mathcal{I}_T(x)=\{v\in T_x\teich_{g,m}\mid F_x(v)\le 1\}.
$$
Since the Teichm\"uller metric $F_x$ is a norm on $T_x\teich_{g,m}$, the indicatrix $\mathcal{I}_T(x)$ is a convex set in $T_x\teich_{g,m}$.

\subsection{Bers slice}
Let $x_0=(M_0,f_0)\in \teich_{g,m}$. Let $\Gamma_0$ be the Fuchsian group of $M_0$ acting on the upper-half plane $\mathbb{H}^2$. Let $A_2=A_2(\mathbb{H}^*,\Gamma_0)$ be the set of holomorphic functions $\varphi$ on the lower-half plane $\mathbb{H}^*$ such that $\varphi(\gamma(z))\gamma'(z)^2=\varphi(z)$ $(z\in \mathbb{H}^*$, $\gamma\in \Gamma_0)$ and
$$
\|\varphi\|_\infty=\sup_{z\in \mathbb{H}^*}4\,{\rm Im}(z)^2|\varphi(z)|<\infty.
$$

For $\varphi\in A_2$, we define a locally univalent function $W_\varphi$ on $\mathbb{H}^*$ such that $W_\varphi(z)=(z+i)^{-1}+o(1)$ as $z\to -i$ and the Schwarzian derivative of $W_\varphi$ coincides with $\varphi$. The Teichm\"uller space $\teich_{g,m}$ is canonically identified with
the \emph{Bers slice} $\Bers{x_0}$ which consists of $\varphi\in A_2$ such that $W_\varphi$ admits a quasiconformal extension on the Riemann sphere, via Bers' simultaneous uniformization (cf. \cite{MR0111834} and \cite{MR257351}). The biholomorphic identification $\beta_{x_0}\colon \teich_{g,m}\to\Bers{x_0}\subset A_2$ is called the \emph{Bers embedding}.
%Henceforth, we always identify $\teich_{g,m}$ with $\Bers{x_0}$.

\subsection{Ahlfors-Weill section}
Let $x_0=(M_0,f_0)\in \teich_{g,m}$.
We define $H_{x_0}\colon A_2\to L^\infty(M_0)$ by
\begin{equation}
\label{eq:Ahlfors_Weill_section}
H_{x_0}(\psi)(z)=-2\,{\rm Im}(z)^2\psi(\overline{z}).
\end{equation}
% where we here identify the $T_{x_0}\teich_{g,m}$ as the quotient space of $\Gamma_0$-equivalent Beltrami differentials on $\mathbb{H}^2$. 
Then $\|H_{x_0}(\psi)\|_\infty=\|\psi\|_\infty/2$. 

Let $B_{r;x_0}^\infty$ be the closed $r$-ball in $A_2$ with respect to $\|\cdot \|_\infty$.
The restriction of $H_{x_0}$ to the ball ${\rm Int}(B^\infty_{2;x_0})$ is called the \emph{Ahlfors-Weill section} which satisfies
\begin{align}
\beta_{x_0}\circ \Phi_{x_0}\circ H_{x_0}(\varphi)
&=\varphi\quad (\varphi\in {\rm Int}(B^\infty_{2;x_0}))\label{eq:Bers-projection1} \\
(\beta_{x_0})_*([H_{x_0}(\psi)])&=\psi\quad (\psi\in T_0A_2=A_2)
\label{eq:Bers-projection2}
\end{align}
 (cf. \cite{MR148896} and \cite[Theorem 6.9]{MR1215481}).
% \cite[\S3.8]{MR927291}).
%for $\psi\in T_0A_2=A_2$, 
Via the linear isomorphism $H_{x_0}$, the Kobayashi-Teichm\"uller indicatrix is realized in $A_2=T_0A_2$ as
$$
H_{x_0}^{-1}(\mathcal{I}_T(x))=\{\psi\in A_2\mid F_x([H_{x_0}(\psi)])\le 1\}.
$$
%
%For $\varphi_1,\varphi_2\in A_2$, we define the \emph{Petersson inner product} by
%$$
%\pp{\varphi_1}{\varphi_2}=\int_{F_0}|z-\overline{z}|^2\varphi_1(z)\overline{\varphi_2(z)}\,dxdy,
%$$
%where $z=x+iy\in \mathbb{H}^*$ and $F_0$ is a Fundamental domain of $\Gamma_0$. 

%For $\varphi\in \mathcal{Q}_{x_0}$,
%we set $\nu[\varphi]=\overline{\varphi}/\lambda_{M_0}$, where $\lambda_{M_0}=\lambda_{M_0}(z)|dz|^2$ is the hyperbolic metric on $M_0$ with curvature $-4$. We call $\nu[\varphi]$ the \emph{harmonic Beltrami differential} induced by $\varphi$. Let $\hb_{x_0}=\{\nu[\varphi]\mid \varphi\in \mathcal{Q}_{x_0}\}$. Via the Bers projection, the space $\hb_{x_0}$ is identified with the holomorphic tangent space $T_{x_0}\teich_{g,m}$.
%We define the \emph{Weil-Petersson inner product} on $T_{x_0}\teich_{g,m}$ by
%\begin{equation}
%\label{eq:WP}
%\wpp{\nu_1}{\nu_2}=\int_{M_0}\nu_1\overline{\nu_2}\,dA_{M_0}
%\end{equation}
%for $\nu_1,\nu_2\in \hb_{x_0}\cong T_{x_0}\teich_{g,m}$,
%where $dA_{M_0}$ is the area element associated to $\lambda_{M_0}$.
%\subsection{}
%For $\varphi_1,\varphi_2\in \mathcal{Q}_{x_0}$, we 
%
%The \emph{$L^2$-norm} of $\psi\in A_2$ is defined by
%$$
%\|\psi\|_2^2=\int_F 4{\rm Im}(z)^2|\psi(z)|^2dxdy
%$$
%where $z=x+iy$ and $F$ is the fundamental domain of $\Gamma_0$ in $\mathbb{H}^*$.
We claim

\begin{lemma}
\label{lem:ball_A_2_indicatrix}
$B^\infty_{2;x_0}\subset H_{x_0}^{-1}(\mathcal{I}_T(x_0)))\subset B^\infty_{6;x_0}$.
\end{lemma}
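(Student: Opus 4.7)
The plan is to treat the two inclusions separately: the first by a one-line estimate from the definition of the Teichm\"uller metric, and the second by contraction of the Kobayashi metric under an inclusion of domains in $A_2$.

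For $B^\infty_{2;x_0}\subset H_{x_0}^{-1}(\mathcal{I}_T(x_0))$, let $\psi\in A_2$ with $\|\psi\|_\infty\le 2$. The identity $\|H_{x_0}(\psi)\|_\infty=\|\psi\|_\infty/2$ recorded right after \eqref{eq:Ahlfors_Weill_section} gives $\|H_{x_0}(\psi)\|_\infty\le 1$, and then for every $q\in\mathcal{Q}_{x_0}$ with $\|q\|=1$ the trivial bound
$$
\left|\int_M H_{x_0}(\psi)\,q\right|\le \|H_{x_0}(\psi)\|_\infty\,\|q\|\le 1
$$
yields $F_{x_0}([H_{x_0}(\psi)])\le 1$, i.e.\ $\psi\in H_{x_0}^{-1}(\mathcal{I}_T(x_0))$.

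For $H_{x_0}^{-1}(\mathcal{I}_T(x_0))\subset B^\infty_{6;x_0}$, I would invoke Kobayashi contraction. The Bers embedding $\beta_{x_0}\colon\teich_{g,m}\to \Bers{x_0}\subset A_2$ is a biholomorphism, and because the Teichm\"uller metric equals the Kobayashi metric on $\teich_{g,m}$, the Kobayashi infinitesimal metric of $\Bers{x_0}$ at $0=\beta_{x_0}(x_0)$ in a direction $\psi\in A_2=T_0A_2$ equals $F_{x_0}([H_{x_0}(\psi)])$; here the identification of tangent vectors is \eqref{eq:Bers-projection2}. Kraus' univalence bound for the Schwarzian gives $\Bers{x_0}\subset B^\infty_{6;x_0}$, and a standard calculation shows that the Kobayashi infinitesimal metric of the Banach ball $B^\infty_{6;x_0}$ at the origin in direction $\psi$ is exactly $\|\psi\|_\infty/6$ (Schwarz lemma on a straight complex line gives the upper bound; a unit linear functional on $A_2$ produces the lower bound). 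Kobayashi contraction under $\Bers{x_0}\subset B^\infty_{6;x_0}$ then gives $F_{x_0}([H_{x_0}(\psi)])\ge\|\psi\|_\infty/6$, so $F_{x_0}([H_{x_0}(\psi)])\le 1$ forces $\|\psi\|_\infty\le 6$.

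The hardest conceptual step is the second inclusion, where the constant $6$ enters entirely through Kraus' bound for the Bers slice; the rest is definitional. I would cite Kraus' estimate rather than reprove it, and I would take care that the tangent identification coming from \eqref{eq:Bers-projection2} matches the natural identification $T_0A_2\cong A_2$ used in the Kobayashi computation on the Banach ball, so that both sides of the contraction inequality refer to the same $\psi$.
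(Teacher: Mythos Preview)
Your proof is correct and follows essentially the same route as the paper's: the first inclusion via the trivial bound $|\int \mu q|\le \|\mu\|_\infty\|q\|$ together with $\|H_{x_0}(\psi)\|_\infty=\|\psi\|_\infty/2$, and the second via the Nehari--Kraus inclusion $\Bers{x_0}\subset B^\infty_{6;x_0}$ combined with the Schwarz/Kobayashi contraction and the identification \eqref{eq:Bers-projection2}. The only cosmetic differences are that the paper phrases the first inclusion by showing boundary points of $H_{x_0}^{-1}(\mathcal{I}_T(x_0))$ have $\|\psi\|_\infty\ge 2$, and for the second inclusion simply cites the value $\|\psi\|_\infty/6$ for the Kobayashi--Finsler norm on the Banach ball rather than sketching its derivation.
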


\begin{proof}
Since $\psi\in \partial  H_{x_0}^{-1}(\mathcal{I}_T(x_0))$ satisfies $F_{x_0}([H_{x_0}(\psi)])=1$,
$$
1=F_{x_0}([H_{x_0}(\psi)])=\sup_{\|q\|=1} \left|\int_{M_0}H_{x_0}(\psi)q\right|
\le \|H_{x_0}(\psi)\|_\infty=\|\psi\|_\infty/2
$$
for all $\psi\in \partial  H_{x_0}^{-1}(\mathcal{I}_T(x_0))$.
This means that $B^\infty_{2;x_0}\subset H_{x_0}^{-1}(\mathcal{I}_T(x_0))$.

Let $\psi\in H_{x_0}^{-1}(\mathcal{I}_T(x_0))\subset A_2=T_0A_2$. 
%Notice that $(\Phi_{x_0})_*(H_{x_0}(\psi))\in T_{x_0}\teich_{g,m}$ is represented by the Beltrami differential $H(\psi)$ (cf. \eqref{eq:Bers_projection_derivative}). 
By Nehari-Kraus' theorem, the image of the Bers embedding is contained in the ball $B^\infty_{6;x_0}$ (cf. \cite{MR1215481}). Since the Teichm\"uller metric coincides with the Kobayashi metric, by the distance decreasing property and \eqref{eq:Bers-projection2}, we have
$$
\dfrac{\|\psi\|_\infty}{6}=\dfrac{\|(\beta_{x_0})_*([H_{x_0}(\psi)])\|_\infty}{6}
\le F_{x_0}([H_{x_0}(\psi)])\le 1,
$$
where the left-hand side of the above calculation is the Kobayashi-Finsler norm of $\psi\in T_0B^\infty_{6;x_0}=T_0A_2$ on $B^\infty_{6;x_0}$ (e.g. \cite{MR1033739}). This means that $\psi\in B^\infty_{6;x_0}$.
\end{proof}

\subsection{Differential of maximal dilatation}
For $x,y\in \teich_{g,m}$, we set
\begin{equation}
\label{eq:green_2}
k_0(x,y)=\tanh d_T(x,y).
\end{equation}
The following lemma immediately follows from the discussion in the proof of \cite[\S6.6, Theorem 7]{MR903027}. For the completeness, we shall give a proof.
\begin{lemma}
\label{lem:Gardiner_book}
Let $x_0\in \teich_{g,m}$.
For $v=[\mu]\in T_{x_0}\teich_{g,m}$,
let $x_t$ be the quasiconformal deformation of $x_0$ associated to the Beltrami differential $t\mu$.
%Let $k_0(t)=k_0(t;\mu)=k_0(x_0,x_t)$ for $t\ge 0$.
Then, 
%there is a universal constant $C_0>0$ such that
\begin{equation}
\label{eq:T-distance1}
\left|k_0(x_0,x_t)-tF_{x_0}(v)\right|\le 4t^2\|\mu\|_\infty^2
\end{equation}
when $0\le t<(2\|\mu\|_\infty)^{-1}$.
\end{lemma}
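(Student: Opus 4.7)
The plan is to prove matching upper and lower bounds on $k_0(x_0,x_t)$ in terms of $tF_{x_0}(v)$, each with error at most $4t^2\|\mu\|_\infty^2$, following the variational strategy of Gardiner's proof of \cite[\S6.6, Theorem~7]{MR903027} with explicit bookkeeping of the quadratic remainder.

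The essential technical input is a quantitative Reich--Strebel estimate: for any Beltrami differential $\nu\in L^\infty(M_0)$ with $\Phi_{x_0}(\nu)=x_t$ (equivalently, $f_\nu$ homotopic to $f_{t\mu}$) and any $q\in \mathcal{Q}_{x_0}$ with $\|q\|=1$,
$$\left|\int_{M_0}(\nu-t\mu)\,q\right|\le 2\bigl(\|\nu\|_\infty^2+t^2\|\mu\|_\infty^2\bigr).$$
I would derive this by applying the classical Reich--Strebel inequality to the trivial Beltrami differential of $f_\nu\circ f_{t\mu}^{-1}$, expanding the composition formula $(\nu-t\mu)/(1-\overline{t\mu}\,\nu)$ and the geometric series $(1-|\nu|^2)^{-1}=1+|\nu|^2+\cdots$ to second order, and absorbing cubic and higher remainders using $\|\nu\|_\infty,\,t\|\mu\|_\infty<1/2$.

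Once this estimate is in hand, both directions of the lemma are short. For the \emph{lower bound} $tF_{x_0}(v)\le k_0(x_0,x_t)+4t^2\|\mu\|_\infty^2$, one restricts the infimum defining $k_0(x_0,x_t)$ to $\nu$ with $\|\nu\|_\infty\le t\|\mu\|_\infty$ (permissible since $t\mu$ itself competes); combining the trivial bound $|\int\nu q|\le \|\nu\|_\infty$ with the quadratic estimate gives $|\int t\mu\,q|\le \|\nu\|_\infty+4t^2\|\mu\|_\infty^2$, and taking infimum in $\nu$ followed by supremum in $q$ yields the claim. For the \emph{upper bound} $k_0(x_0,x_t)\le tF_{x_0}(v)+4t^2\|\mu\|_\infty^2$, Teichm\"uller's existence theorem furnishes an extremal $\nu^*$ in the class of $t\mu$ with $\|\nu^*\|_\infty=k_0(x_0,x_t)$, and the Hamilton--Krushkal characterization (applicable on closed surfaces of finite type) gives $\|\nu^*\|_\infty=\sup_{\|q\|=1}|\int\nu^* q|$; applying the quadratic estimate to $\nu^*$ bounds $|\int\nu^* q|\le |\int t\mu q|+4t^2\|\mu\|_\infty^2$ for each $q$, and taking the supremum concludes.

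The main obstacle is the quadratic Reich--Strebel estimate itself: classical Reich--Strebel only yields a bound on $|\int\nu_0 q|$ of order $\|\nu_0\|_\infty^2$ for a trivial $\nu_0$, and converting this into the stated bound on $\int(\nu-t\mu)q$ requires pulling the Reich--Strebel identity on $M_t$ back to $M_0$, expanding the Jacobian contributions along $f_{t\mu}$, and carefully tracking constants within the range $\|\nu\|_\infty,\,t\|\mu\|_\infty\le 1/2$. This is the computational heart of the argument, but it is essentially the calculation carried out in the proof of \cite[\S6.6, Theorem~7]{MR903027}.
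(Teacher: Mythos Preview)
Your approach is sound and would yield the lemma, but it differs from the paper's route. The paper does not re-derive a Reich--Strebel estimate for the pair $(\nu,t\mu)$, nor does it invoke Teichm\"uller existence or the Hamilton--Krushkal condition explicitly. Instead it introduces the functionals
\[
I[\mu]=\sup_{\|q\|=1}\left|\int_{M_0}\frac{\mu q}{1-|\mu|^2}\right|,\qquad
J[\mu]=\sup_{\|q\|=1}\int_{M_0}\frac{|\mu|^2|q|}{1-|\mu|^2},
\]
quotes Gardiner's packaged inequality \cite[\S6.4, Theorem~4]{MR903027}, which bounds $\bigl|k_0/(1-k_0^2)-I[t\mu]\bigr|$ by $J[t\mu]+k_0^2/(1-k_0^2)$, and then reaches \eqref{eq:T-distance1} via the triangle-inequality chain
\[
\left|k_0-\frac{k_0}{1-k_0^2}\right|+\left|\frac{k_0}{1-k_0^2}-I[t\mu]\right|+\left|I[t\mu]-tF_{x_0}(v)\right|,
\]
each term being bounded elementarily by a multiple of $t^2\|\mu\|_\infty^2$ once $t\|\mu\|_\infty\le 1/2$. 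Of course Gardiner's \S6.4 Theorem~4 is itself built on Reich--Strebel and Hamilton--Krushkal, so the underlying analytic content is the same; the paper simply quotes the packaged result and avoids the bookkeeping you propose to redo. Your route has the virtue of being self-contained and making transparent where each direction of the inequality comes from; the paper's is shorter because it outsources that computation to the reference.
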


\begin{proof}
%We define  $k_0(t)\ge 0$ by $k_0(t)=\tanh (d_T(x_0,x_t))$.
We may assume that $\mu\ne 0$. By definition, $k_0(t):=k_0(x_0,x_t)\le t\|\mu\|_\infty$ when $0\le t<1/\|\mu\|_\infty$.
Following \cite[\S6.4]{MR903027}, we set
\begin{align*}
I[\mu]&=\sup_{\|q\|=1}\left|\int_{M_0}\dfrac{\mu q}{1-|\mu|^2}\right|, \\
J[\mu]&=\sup_{\|q\|=1}\int_{M_0}\dfrac{|\mu|^2|q|}{1-|\mu|^2}.
%H[\mu]&=\sup_{\|q\|=1}\left|{\rm Re}\int_{M_0}\mu q\right|.
\end{align*}
By a simple calculation, we have
$$
|I[t\mu]-tF_{x_0}(v)|\le \dfrac{t^3\|\mu\|_\infty^3}{1-t^2\|\mu\|_\infty^2}.
$$
Combining with \cite[\S6.4, Theorem 4]{MR903027}, we obtain
\begin{align}
\left|
k_0(x_0,x_t)-tF_{x_0}(v)
\right|
&\le
\left|
k_0(t)-
\dfrac{k_0(t)}{1-k_0(t)^2}
\right|
+
\left|
\dfrac{k_0(t)}{1-k_0(t)^2}-I[t\mu]
\right|
+
\left|
I[t\mu]-tF_{x_0}(v)
\right| 
\nonumber
\\
&\le
\dfrac{k_0(t)^3}{1-k_0(t)^2}+
J[t\mu]+\dfrac{k_0(t)^2}{1-k_0(t)^2}
+
\dfrac{t^3\|\mu\|_\infty^3}{1-t^2\|\mu\|_\infty^2} 
\nonumber
\\
&\le
\dfrac{2t^2\|\mu\|_\infty^2+2t^3\|\mu\|_\infty^3}{1-t^2\|\mu\|_\infty^2}
\le 4t^2\|\mu\|_\infty^2
\label{eq:teichmuller-distance}
\end{align}
when $0\le t\le (2\|\mu\|_\infty)^{-1}$.
%If $t\le (2\|\mu\|_\infty)^{-1}$, the right-hand side of \eqref{eq:teichmuller-distance} is at most $1$. 
%Since the inverse hyperblic tangent function is real analytic, we get the desired inequality \eqref{eq:T-distance1} for $t\le (2\|\mu\|_\infty)^{-1}$ with a universal constant $C_0>0$.
\end{proof}

\section{Complex analysis}

\subsection{Bergman Kernel}
\label{subsec:Bergman_kernel}
Let $\Omega$ be an $N$-dimensional complex manifold. We denote by $\mathcal{O}L^2_{N,0}(\Omega)$ the Hilbert space of holomorphic $N$-forms $f=f(z)dZ$ ($dZ=dz_1\wedge\cdots\wedge dz_N$) with the inner product
\begin{equation}
\label{eq:inner_product}
(f_1,f_2)=\dfrac{i^{N^2}}{2^N}\int_\Omega f_1\wedge \overline{f_2}
=\int_{\Omega}f_1(z)\overline{f_2(z)}\,dV_E(z),
%\,dx_1dy_1\cdots dx_Ndy_N,
\end{equation}
where $dV_E=dx_1dy_1\cdots dx_Ndy_N$ is the standard Euclidean measure (Lebesgue measure) on local charts $(z_1,\ldots,z_N)$
and  $z_k=x_k+iy_k$ $(1\le k\le N)$.
The \emph{reproducing kernel form} on $\mathcal{O}L^2_{N,0}(\Omega)$ is a bi-form on $\Omega$ defined by
$$
K_\Omega(z,w)=\sum_{k=1}^\infty f_k\otimes \overline{f_k}=\sum_{k=1}^\infty f_k(z)\overline{f_k(w)}\,
dZ\otimes d\overline{W},
$$
where $d\overline{W}=d\overline{w}_1\wedge \cdots \wedge d\overline{w}_N$ and $\{f_k=f_k(z)dZ\}_{k=1}^\infty$ is a complete orthonormal basis of $\mathcal{O}L^2_{N,0}(\Omega)$
(cf. \cite{MR112162} and \cite[Chapter 4]{MR3887636}). We call $K_\Omega=K_\Omega(z,z)$ the \emph{Bergman kernel form} on $\Omega$:
%, which is thought of as a non-negative form 
\begin{align}
K_\Omega
&=K_\Omega(z)\,
dZ\otimes d\overline{Z}
%dz_1\wedge\cdots\wedge dz_N\otimes d\overline{z}_1\wedge \cdots \wedge d\overline{z}_N
%dz_1\wedge\cdots\wedge dz_N\wedge d\overline{z}_1\wedge \cdots \wedge d\overline{z}_N
=
%\left(
\sum_{k=1}^\infty |f_k(z)|^2\,
%\right)
dZ\otimes d\overline{Z}
%dz_1\wedge\cdots\wedge dz_N\otimes d\overline{z}_1\wedge \cdots \wedge d\overline{z}_N
\label{eq:Bergman_kernel_form1}
\end{align}
on $\Omega$. The transformation law (cf. \cite[(4.9)]{MR3887636})
$$
K_{\Omega'}(F(z))|\det F'(z)|^2=K_\Omega(z)\quad (z\in \Omega)
$$
with a biholomorphic mapping (or a local chart) $F\colon \Omega\to \Omega'$
($F'$ is the complex Jacobian of $F$)
implies that
%\begin{equation}
%\label{eq:fundamental_eq}
%dV_E=\dfrac{i^{N^2}}{2^{N}}dz_1\wedge\cdots\wedge dz_{N}\wedge d\overline{z}_1\wedge \cdots \wedge d\overline{z}_{N},
%\end{equation}
\begin{equation}
\label{eq:fundamental_eq2}
%\dfrac{i^{N^2}}{2^{N}}
K_\Omega=K_\Omega(z)dV_E
\end{equation}
is a well-defined non-negative $2N$-form on $\Omega$. It is known that 
\begin{equation}
\label{eq:fundamental_eq3}
%\dfrac{i^{N^2}}{2^{N}}
K_\Omega\le 
%\dfrac{i^{N^2}}{2^{N}}
K_{\Omega'}
\end{equation}
for any open set $\Omega'\subset \Omega$ (cf. \cite[Corollary 4.1]{MR3887636}).

When $\Omega$ is a domain in $\mathbb{C}^N$, the space $\mathcal{O}L^2_{N,0}(\Omega)$ is isometrically identified with the space $\mathcal{O}L^2_{0,0}(\Omega)$ of $L^2$-holomorphic functions with respect to the standard Euclidean measure (Lebesgue measure) by
$$
\mathcal{O}L^2_{N,0}(\Omega)\ni f=f(z)dZ\mapsto f(\cdot )\in \mathcal{O}L^2_{0,0}(\Omega),
$$
%where $z_1,\cdots,z_N$ is the coordinates of $\mathbb{C}^N$
(cf. \cite[Example 4.1]{MR3887636}). The \emph{Bergman kernel function} on a domain $\Omega$ in $\mathbb{C}^N$ is defined by
\begin{equation}
\label{eq:Bergman_kernel_function}
\sum_{k=1}^\infty|f_k(z)|^2
\end{equation}
where $\{f_k\}_{k=1}^\infty$ is a complete orthonormal basis of $\mathcal{O}L^2_{0,0}(\Omega)$ (cf. \cite[\S4.1.1]{MR3887636}). Hence, when $\Omega$ is a domain, from \eqref{eq:inner_product} and \eqref{eq:Bergman_kernel_form1}, the coefficient $K_\Omega(z)$ of the Bergman kernel form on $\Omega$ coincides with the Bergman kernel function \eqref{eq:Bergman_kernel_function} on $\Omega$.
%As discussed in \eqref{eq:fundamental_eq2},
%%Since
%%\begin{equation}
%%\label{eq:fundamental_eq}
%%dV_E=\dfrac{i^{N^2}}{2^{N}}dz_1\wedge\cdots\wedge dz_{N}\wedge d\overline{z}_1\wedge \cdots \wedge d\overline{z}_{N}
%%\end{equation}
%%on $\mathbb{C}^{3g-3+m}$,
%the inequality \eqref{eq:Bergman_indicatrix} is thought of as comparisons of non-negative $(6g-6+2m)$-forms on $\teich_{g,m}$.

%\subsection{Weil-Petersson volume in tangence spaces}
%The Weil-Petersson metric defines the volume form on each holomorphic tangent space of $\teich_{g,m}$.
%For $x\in \teich_{g,m}$, let $\volwp(\mathcal{I}_T(x))$ be the Weil-Petersson volume of the Kobayashi-Teichm\"uller indicatrix $\mathcal{I}_T(x)$.
%
%By fixing an orthonormal basis on $A_2$, we identify $(A_2,\pp{\cdot}{\cdot})$ with $\mathbb{C}^{3g-3+m}$ as inner product spaces, where $\mathbb{C}^{3g-3+m}$ is thought of as the Euclidean space with standard Hermitian inner product. In particular, $\teich_{g,m}$ is recognized as a bounded domain in $\mathbb{C}^{3g-3+m}$ under the identification $\teich_{g,m}\cong \Bers{x_0}$. Let $V_E$ be the Euclidean volume form on $A_2\cong \mathbb{C}^{3g-3+m}$ as above. Then
%$$
%\volwp(\mathcal{I}_T(x))=V_E(H_{x_0}^{-1}(\mathcal{I}_T(x))).
%$$

\subsection{Pluricomplex Green function}
Let $\Omega$ be a domain in $\mathbb{C}^N$.
The \emph{pluricomplex Green function} $g_\Omega$ with a pole at $w\in \Omega$ is defined by
$$
g_\Omega(w,z)=\sup\{u(z)\in {\rm PSH}(\Omega)^-\mid \limsup_{z\to w}(u(z)-\log|z-w|)<\infty\}
$$
where ${\rm PSH}(\Omega)^-$ denotes the class of negative plurisubharmonic functions on $\Omega$ (cf. \cite{MR820321}). In \cite{MR1142683} and \cite{MR4028456}, it is shown that
\begin{equation}
\label{eq:green_function}
g_{\teich_{g,m}}(x,y)=\log \tanh d_T(x,y)=\log k_0(x,y)
\end{equation}
for $x,y\in \teich_{g,m}$.

\section{Estimates of the Bergman Kernel}
\label{sec:estimate_Bergman_kernel}
\subsection{Busemann volume forms}
We first recall the \emph{Busemann volume form} on an $N$-dimensional Finsler manifold $(M,F)$ after \cite{MR2132656} and \cite[\S4]{MR3213829}. Usually, the Finsler norm is assumed to be smooth. However, we only assume here the Finsler norm to be continuous.

Let $x\in M$ and $B_x=\{v\in T_xM\mid F_x(v)\le 1\}$ be the unit ball (the $F$-indicatrix) with respect to the Finsler norm $F$. For an identification $T_xM\cong \mathbb{R}^N$ induced by a local coodinate around $x$, we define the \emph{Busemann volume form} on $M$ by
\begin{equation}
\label{eq:Buseman-volume-form}
d\mu_{M;B}=\dfrac{\epsilon_N}{V_E(B_x)}dV_E,
%dx_1\cdots dx_N
\end{equation}
where $\epsilon_N$ is the volume of the unit ball in $\mathbb{R}^N$ and $V_E$ is the standard Euclidean measure (Lebesgue measure) on $\mathbb{R}^N$ as the previous section (``B" in the subscription of the notation stands for the initial letter of ``Busemann").

\subsection{Comparison}
%We fix a complex linear identification $L\colon A_2\cong \mathbb{C}^{3g-3+m}$, and $\Bers{x_0}$ (and hence $\teich_{g,m}$) is realized as a bounded domain in $\mathbb{C}^{3g-3+m}$. The identification induces the complex linear isomorphism $T_{x_0}\teich_{g,m}\cong \mathbb{C}^{3g-3+m}$ which is defined by the composition of $L$ and the differential of the Bers embedding at $x_0$.  
%Let $\ve$ be the Euclidean volume form on $\mathbb{C}^{3g-3+m}$, which is also thought of as a volume form on $T_{x_0}\teich_{g,m}\cong \mathbb{C}^{3g-3+m}$.
In this section, we show 
\begin{theorem}[Bergman kernel form and Busemann volume form]
\label{thm:Bergman_indicatrix}
%Once we fix a complex linear identification $A_2\cong \mathbb{C}^{3g-3+m}$, 
\begin{equation}
\label{eq:Bergman_indicatrix}
\dfrac{1}{\epsilon_{6g-6+2m}}d\mu_{\teich_{g,m};B}\le
%\dfrac{i^{(3g-3+m)^2}}{2^{3g-3+m}}
K_{\teich_{g,m}}
\le
\dfrac{3^{6g-6+2m}}{\epsilon_{6g-6+2m}}d\mu_{\teich_{g,m};B}
%\ve(\mathcal{I}_T(x_0))}.
\end{equation}
on $\teich_{g,m}$.
%as $(3g-3+m, 3g-3+m)$-form on $\teich_{g,m}$.
\end{theorem}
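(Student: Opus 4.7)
The plan is to mimic B{\l}ocki's method for convex domains: linearize the Teichm\"uller space at a fixed point $x_0\in\teich_{g,m}$ via the Bers embedding, squeeze the Kobayashi--Teichm\"uller indicatrix between two Euclidean balls in $A_2$ using Lemma \ref{lem:ball_A_2_indicatrix}, and combine monotonicity of the Bergman kernel with a pluripotential-theoretic lower bound that exploits the explicit pluricomplex Green function \eqref{eq:green_function}. Write $N=3g-3+m$ and pass to Bers coordinates $\beta_{x_0}\colon\teich_{g,m}\to \Bers{x_0}\subset A_2\cong \mathbb{C}^{N}$, so that $x_0\leftrightarrow 0$ and, by \eqref{eq:Bers-projection2}, $(\beta_{x_0})_*\mathcal{I}_T(x_0)$ is the set $\mathcal{I}:=H_{x_0}^{-1}(\mathcal{I}_T(x_0))$. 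Lemma \ref{lem:ball_A_2_indicatrix} yields $B^\infty_{2;x_0}\subset \mathcal{I}\subset B^\infty_{6;x_0}$, and since the $\|\cdot\|_\infty$-balls scale homogeneously one has $V_E(\mathcal{I})\le V_E(B^\infty_{6;x_0})=3^{2N}\,V_E(B^\infty_{2;x_0})$.

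For the upper bound, I would use \eqref{eq:Bers-projection1} to observe that the Ahlfors--Weill section embeds $B^\infty_{2;x_0}$ as a subdomain of $\Bers{x_0}$. Because $B^\infty_{2;x_0}$ is a bounded, balanced, convex (hence pseudoconvex) domain in $\mathbb{C}^N$, the standard monomial computation gives $K_{B^\infty_{2;x_0}}(0)=1/V_E(B^\infty_{2;x_0})$, and monotonicity \eqref{eq:fundamental_eq3} together with the volume comparison above yields
\[
K_{\teich_{g,m}}(x_0)\,\le\, K_{B^\infty_{2;x_0}}(0)\,=\,\frac{1}{V_E(B^\infty_{2;x_0})}\,\le\,\frac{3^{2N}}{V_E(\mathcal{I})},
\]
which, rewritten via \eqref{eq:Buseman-volume-form}, is the right-hand side of \eqref{eq:Bergman_indicatrix}.

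For the lower bound, plain monotonicity against the outer ball $B^\infty_{6;x_0}$ would only deliver $K_{\teich_{g,m}}(x_0)\ge 1/(3^{2N}V_E(\mathcal{I}))$, losing the factor $3^{2N}$. To obtain the sharp bound, I would apply B{\l}ocki's Ohsawa--Takegoshi-based inequality $K_\Omega(z)\ge 1/V_E(I_A^\Omega(z))$, valid on any bounded pseudoconvex $\Omega\subset\mathbb{C}^N$, where $I_A^\Omega(z)$ denotes the Azukawa indicatrix. Since $\Bers{x_0}$ is a bounded domain of holomorphy, hence pseudoconvex, it suffices to show $I_A^{\Bers{x_0}}(0)=\mathcal{I}$. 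Combining \eqref{eq:green_function}, the Ahlfors--Weill identity $\beta_{x_0}^{-1}(\lambda\psi)=\Phi_{x_0}(H_{x_0}(\lambda\psi))$ on $B^\infty_{2;x_0}$ given by \eqref{eq:Bers-projection1}, and the expansion
\[
k_0\bigl(x_0,\Phi_{x_0}(H_{x_0}(\lambda\psi))\bigr)=|\lambda|\,F_{x_0}([H_{x_0}(\psi)])+O(|\lambda|^2)
\]
obtained by applying Lemma \ref{lem:Gardiner_book} to the rotated Beltrami differential $e^{i\arg\lambda}H_{x_0}(\psi)$, one computes $\limsup_{\lambda\to 0}\exp(g_{\Bers{x_0}}(0,\lambda\psi))/|\lambda|=F_{x_0}([H_{x_0}(\psi)])$, whence $I_A^{\Bers{x_0}}(0)=\mathcal{I}$ and $K_{\teich_{g,m}}(x_0)\ge 1/V_E(\mathcal{I})$.

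The main obstacle is precisely the identification of the Azukawa indicatrix with $\mathcal{I}$: without the explicit Green function formula \eqref{eq:green_function} and the first-order distance estimate of Lemma \ref{lem:Gardiner_book}, one cannot convert Teichm\"uller-theoretic information about tangent vectors into the pluripotential information about Green sublevel sets needed to invoke B{\l}ocki's inequality. The remaining ingredients --- pseudoconvexity of $\Bers{x_0}$, the explicit Bergman kernel at the center of a balanced pseudoconvex domain, monotonicity \eqref{eq:fundamental_eq3}, and the Ahlfors--Weill inclusion $B^\infty_{2;x_0}\subset\Bers{x_0}$ --- are either standard or already assembled earlier in the excerpt.
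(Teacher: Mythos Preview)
Your proposal is correct and follows essentially the same approach as the paper: for the lower bound you identify the Azukawa indicatrix with the Kobayashi--Teichm\"uller indicatrix via \eqref{eq:green_function} and Lemma~\ref{lem:Gardiner_book} and then invoke the B{\l}ocki(--Zwonek) inequality, which is exactly the paper's second proof; for the upper bound you use the Ahlfors--Weill inclusion and monotonicity against a balanced convex subdomain, differing from the paper only in comparing to $B^\infty_{2;x_0}$ rather than to $(1/3)H_{x_0}^{-1}(\mathcal{I}_T(x_0))$, a cosmetic change that yields the identical constant.
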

%Since
%\begin{equation}
%\label{eq:fundamental_eq}
%dV_E=\dfrac{i^{(3g-3+m)^2}}{2^{3g-3+m}}dz_1\wedge\cdots\wedge dz_{3g-3+m}\wedge d\overline{z}_1\wedge \cdots \wedge d\overline{z}_{3g-3+m}
%\end{equation}
%on $\mathbb{C}^{3g-3+m}$,
%the inequality \eqref{eq:Bergman_indicatrix} is thought of as comparisons of non-negative $(6g-6+2m)$-forms on $\teich_{g,m}$.
As discussed in \eqref{eq:fundamental_eq2},
%Since
%\begin{equation}
%\label{eq:fundamental_eq}
%dV_E=\dfrac{i^{N^2}}{2^{N}}dz_1\wedge\cdots\wedge dz_{N}\wedge d\overline{z}_1\wedge \cdots \wedge d\overline{z}_{N}
%\end{equation}
%on $\mathbb{C}^{3g-3+m}$,
the inequality \eqref{eq:Bergman_indicatrix} is regarded as comparisons of non-negative $(6g-6+2m)$-forms on $\teich_{g,m}$.

In the pluripotential theory, the relation \eqref{eq:Bergman_indicatrix} is first observed by Z. B{\l}ocki  for convex domains in $\mathbb{C}^n$ (cf. \cite[Theorem 2]{MR3364678}). The inequality \eqref{eq:Bergman_indicatrix} is closely related to the inequality conjectured by Suita \cite{MR367181} (cf. \cite{MR3318425}).

We give two proofs of the lower bound of Theorem \ref{thm:Bergman_indicatrix}.
The first proof is based on the same line as his proof in \cite{MR3364678}, meanwhile we apply Teichm\"uller theory in the essential part of the proof.  The second proof is given by characterizing the Azukawa metric on the Teichm\"uller space and applying B{\l}ocki-Zwonek's result in \cite{MR3318425}.
The discussion of the upper estimate is a mimic of the discussion by B{\l}ocki in \cite[Theorem 5]{MR3364678} for the convex domains (see also \cite[Corollary 4]{MR4011523}).
%The multiple constant in the left-hand side of \eqref{eq:Bergman_indicatrix} comes from the equation
%$$
%dV_E=\dfrac{i^{(3g-3+m)^2}}{2^{3g-3+m}}dz_1\wedge\cdots\wedge dz_{3g-3+m}\wedge d\overline{z}_1\wedge \cdots \wedge d\overline{z}_{3g-3+m}
%$$
%on $\mathbb{C}^{3g-3+m}$.
% Indeed, Theorem \ref{thm:Bergman_indicatrix} follows from the following estimate by B{\l}ocki \cite[Theorem 1]{MR3364678}:
%$$
%K_\Omega(z)\ge \dfrac{1}{e^{2Na}V_E(\{g_{\Omega}(z,\cdot)<-a\})}
%$$
%for any pseudoconvex domain $\Omega\subset \mathbb{C}^N$ and $z\in \Omega$, and letting $a\to \infty$ with the following lemma:

\subsection{First proof of lower estimate}
\label{subsec:1st_proof_lower}
It suffices to confirm the equation \eqref{eq:Bergman_indicatrix}  for a local chart at $x_0$. 

We fix a complex linear identification $L\colon A_2\cong \mathbb{C}^{3g-3+m}$. Then $L\circ \beta_{x_0}\colon \teich_{g,m}\to \mathbb{C}^{3g-3+m}$ is a complex local chart at $x_0$ with $L\circ \beta_{x_0}(x_0)=0\in \mathbb{C}^{3g-3+m}$. We denote by $\ve$ the Euclidean volume form (Lebesgue measure) on $\mathbb{C}^{3g-3+m}$ as above. From \eqref{eq:Bers-projection2}, the coordinate $L\circ \beta_{x_0}$ induces a complex linear isomorphism
$$
L\circ ((\Phi_{x_0})_*\circ H_{x_0})^{-1}\colon T_{x_0}\teich_{g,m}\to \mathbb{C}^{3g-3+m},
$$
which induces the Euclidean volume form on $T_{x_0}\teich_{g,m}$. For simplicity, we denote by $V_E$ the volume form on $T_{x_0}\teich_{g,m}$.
%
%$\Bers{x_0}$ (and hence $\teich_{g,m}$) is realized as a bounded domain in $\mathbb{C}^{3g-3+m}$. The identification induces the complex linear isomorphism $T_{x_0}\teich_{g,m}\cong \mathbb{C}^{3g-3+m}$ which is defined by the composition of $L$ and the differential of the Bers embedding at $x_0$.  
%Let $\ve$ be the Euclidean volume form on $\mathbb{C}^{3g-3+m}$, which is also thought of as a volume form on $T_{x_0}\teich_{g,m}\cong \mathbb{C}^{3g-3+m}$.

As remarked in the previous section, the following lemma is first observed by B{\l}ocki \cite[Proposition 3]{MR3364678} for convex domains in the complex Euclidean space by applying Lempert's theory \cite{MR660145}.

\begin{lemma}[Volume of sublevel sets of pluricomplex Green function]
\label{lem:volume_indicatrix}
%After identifying $\teich_{g,m}$ with the Bers slice $\Bers{x_0}\subset A_2\cong \mathbb{C}^{3g-3+m}$, we have
Under the above identifications $T_{x_0}\teich_{g,m}\cong\mathbb{C}^{3g-3+m}$ and $\teich_{g,m}\cong \Bers{x_0}\subset A_2\cong \mathbb{C}^{3g-3+m}$, we have
$$
\lim_{a\to \infty}e^{-2(3g-3+m)a}V_E(\{y\in \teich_{g,m}\mid g_{\teich_{g,m}}(x_0,y)<-a\})=\ve(\mathcal{I}_T(x_0)).
$$
%where we identify $\teich_{g,m}$ with a domain in $A_2$.
\end{lemma}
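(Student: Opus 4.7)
The approach mimics B{\l}ocki's argument \cite{MR3364678} for convex domains, with Lempert's theorem replaced by the infinitesimal linearization of the Kobayashi ($=$ Teichm\"uller) distance provided by Lemma \ref{lem:Gardiner_book}. By \eqref{eq:green_function}, setting $r=e^{-a}$, the sublevel set becomes $\{y\in\teich_{g,m}:k_0(x_0,y)<r\}$. Under the identification $\teich_{g,m}\cong\Bers{x_0}\subset A_2\cong\mathbb{C}^{3g-3+m}$ via $L\circ\beta_{x_0}$, the change of variables $\psi=r\Psi$ multiplies $dV_E$ by $r^{2(3g-3+m)}$, so
$$e^{-2(3g-3+m)a}\,V_E\bigl(\{y:g_{\teich_{g,m}}(x_0,y)<-a\}\bigr)=V_E(S_r),$$
where $S_r=\{\Psi\in A_2:r\Psi\in\Bers{x_0},\ k_0(x_0,\beta_{x_0}^{-1}(r\Psi))<r\}$. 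It remains to prove $V_E(S_r)\to\ve(\mathcal{I}_T(x_0))$ as $r\to 0$.

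For pointwise convergence of $\mathbf{1}_{S_r}$, fix $\Psi$ and take $r$ so small that $r\|\Psi\|_\infty<2$; then $r\Psi\in\mathrm{Int}(B^\infty_{2;x_0})\subset\Bers{x_0}$, and \eqref{eq:Bers-projection1} gives $\beta_{x_0}^{-1}(r\Psi)=\Phi_{x_0}(rH_{x_0}(\Psi))$, which is the quasiconformal deformation $x_r$ of Lemma \ref{lem:Gardiner_book} with $\mu=H_{x_0}(\Psi)$, of sup-norm $\|\Psi\|_\infty/2$. That lemma yields
$$\bigl|k_0(x_0,\beta_{x_0}^{-1}(r\Psi))-rF_{x_0}([H_{x_0}(\Psi)])\bigr|\le r^2\|\Psi\|_\infty^2,$$
so $r^{-1}k_0(x_0,\beta_{x_0}^{-1}(r\Psi))\to F_{x_0}([H_{x_0}(\Psi)])$ and hence $\mathbf{1}_{S_r}(\Psi)\to\mathbf{1}_{H_{x_0}^{-1}(\mathcal{I}_T(x_0))}(\Psi)$ for every $\Psi$ off the indicatrix boundary, which has Lebesgue measure zero since $\mathcal{I}_T(x_0)$ is a bounded convex set.

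To apply dominated convergence I need a uniform envelope for $S_r$. For this I would use the Kobayashi-metric interpretation of $k_0$: by Nehari-Kraus one has $\Bers{x_0}\subset B^\infty_{6;x_0}$, and the distance-decreasing property of the Kobayashi metric under the holomorphic inclusion $\Bers{x_0}\hookrightarrow B^\infty_{6;x_0}$ yields, for $\Psi\in S_r$,
$$\tanh^{-1}\!\bigl(r\|\Psi\|_\infty/6\bigr)\le d_T(x_0,\beta_{x_0}^{-1}(r\Psi))=\tanh^{-1} k_0(x_0,\beta_{x_0}^{-1}(r\Psi))<\tanh^{-1} r,$$
so $\|\Psi\|_\infty<6$ uniformly in $r$. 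Dominated convergence then delivers $V_E(S_r)\to V_E(H_{x_0}^{-1}(\mathcal{I}_T(x_0)))=\ve(\mathcal{I}_T(x_0))$, the last equality coming from the linear identification $(\beta_{x_0})_*=((\Phi_{x_0})_*\circ H_{x_0})^{-1}$ fixed before the lemma. The main obstacle is precisely this uniform envelope: the infinitesimal linearization is immediate from Lemma \ref{lem:Gardiner_book}, but controlling the global size of $S_r$ (the analog of Lempert's linear geodesics in B{\l}ocki's convex setting) is what requires combining Nehari-Kraus with the distance-decreasing property of the Kobayashi metric.
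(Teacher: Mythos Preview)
Your argument is correct and rests on the same two inputs as the paper's proof: the linearization of $k_0(x_0,\cdot)$ from Lemma~\ref{lem:Gardiner_book}, and the Nehari--Kraus bound (used via the distance-decreasing property of the Kobayashi metric) to control the size of the sublevel set.

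The packaging differs. The paper works in ``polar coordinates'': it parametrizes points by $t\psi$ with $\psi\in\partial\bigl(H_{x_0}^{-1}(\mathcal{I}_T(x_0))\bigr)$, uses Lemma~\ref{lem:Gardiner_book} with $F_{x_0}([H_{x_0}(\psi)])=1$ and $\|\psi\|_\infty\le 6$ (Lemma~\ref{lem:ball_A_2_indicatrix}) to get $|e^{-a}-t|\le 144t^2$, and then solves this quadratic inequality to sandwich the sublevel set between two explicit dilates $\delta_2(a)\cdot H_{x_0}^{-1}(\mathcal{I}_T(x_0))$ and $\delta_1(a)\cdot H_{x_0}^{-1}(\mathcal{I}_T(x_0))$ with $\delta_i(a)=e^{-a}+O(e^{-2a})$. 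You instead rescale by $r^{-1}$, prove a.e.\ pointwise convergence of the indicator of $S_r$ to that of the indicatrix, and invoke dominated convergence with the envelope $\{\|\Psi\|_\infty<6\}$. Your envelope argument is the integrated version of the infinitesimal bound the paper draws from Lemma~\ref{lem:ball_A_2_indicatrix}. The paper's sandwich is slightly more quantitative (it gives an explicit rate $O(e^{-a})$ for the relative error), while your dominated-convergence route is softer and avoids solving for $\delta_i(a)$; either is adequate for the limit statement.
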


\begin{proof}
Take $a>0$ such that $\beta_{x_0}(\{y\in \teich_{g,m}\mid g_{\teich_{g,m}}(x_0,y)\le-a\})\subset {\rm Int}(B^\infty_{1/3;x_0})$.
Let $\mathcal{S}_{x_0}=\partial (H_{x_0}^{-1}(\mathcal{I}_T(x_0)))\subset A_2\cong \mathbb{C}^{3g-3+m}$, 
%Take $C_1>0$ such that $\|H_{x_0}(\varphi)\|_\infty\le C_1$ for $\varphi\in \mathcal{S}_{x_0}$.
and $\psi\in \mathcal{S}_{x_0}$. 
Since $\|\psi\|_\infty\ge 2$ by Lemma \ref{lem:ball_A_2_indicatrix},
from \eqref{eq:Bers-projection1},
%when $a>0$ is sufficently large, 
any solution of the equation $k_0(x_0,\Phi_{x_0}(tH_{x_0}(\psi)))=e^{-a}$ on $t$ satisfies $0\le t<1/6$.
Since $F_{x}([H_{x_0}(\psi)])=1$ and $\|H_{x_0}(\psi)\|_\infty=\|\psi\|_\infty/2\le 3$, from Lemma \ref{lem:Gardiner_book}, we have
$$
|e^{-a}-t|\le 144t^2.
$$
From \eqref{eq:green_function}, we obtain
%Since $g_{\teich_{g,m}}(x_0,x)=\log k_0(x_0,x)$ (cf. \cite{MR4028456}),
%after identifying $\teich_{g,m}$ with $\Bers{x_0}$,
\begin{align*}
\{t\psi\mid 0\le t< \delta_2(a),\psi\in \mathcal{S}_{x_0}\}
&\subset \beta_{x_0}(\{y\in \teich_{g,m}\mid g_{\teich_{g,m}}(x_0,y)<-a\}) \\
&\subset\{t\psi\mid 0\le t< \delta_1(a),\psi\in \mathcal{S}_{x_0}\},
\end{align*}
when $a>0$ is sufficiently large,
where $\delta_1(a)=(1-\sqrt{1-576e^{-a}})/288$ and $\delta_2(a)=(\sqrt{1+576e^{-a}}-1)/288$. Since $\delta_i(a)=e^{-a}+O(e^{-2a})$ as $a\to \infty$ for $i=1,2$, we have
$$
%V_E(\{y\in \teich_{g,m}\mid g_{\teich_{g,m}}(x_0,y)<-a\})
V_E(\{g_{\teich_{g,m}}(x_0,y)<-a\})
=e^{-2(3g-3+m)a}\ve(\mathcal{I}_T(x_0))+o(e^{-2(3g-3+m)a})
$$
as $a\to \infty$.
\end{proof}

Let us finish the proof of the lower estimate in Theorem \ref{thm:Bergman_indicatrix}.
 B{\l}ocki \cite[Theorem 1]{MR3364678} observed that
$$
K_\Omega(z)\ge \dfrac{1}{e^{2Na}V_E(\{g_{\Omega}(z,\cdot)<-a\})}
$$
for any pseudoconvex domain $\Omega\subset \mathbb{C}^N$ and $z\in \Omega$. Notice that B{\l}ocki \cite{MR3364678} considered the Bergman kernel as a reproducing kernel function on the space of $L^2$-holomorphic functions with respect to the Lebesgue measure $dV_E$ (cf. \eqref{eq:Bergman_kernel_function}).
Since our inner product is defined as \eqref{eq:inner_product}, by \eqref{eq:fundamental_eq2}, the Bergman kernel function on $\Bers{x_0}\subset \mathbb{C}^{3g-3+m}$ appears as the coefficient of the $(6g-6+2m)$-form $K_{\teich_{g,m}}$ in terms of the chart discussed in the beginning of this section.
%$(i^{(3g-3+m)^2}/2^{3g-3+m})K_{\teich_{g,m}}$.
%, that is why the multiple constant appears in the left-hand side of \eqref{eq:Bergman_indicatrix}. 
Hence, we get the desired inequality \eqref{eq:Bergman_indicatrix} from Lemma \ref{lem:volume_indicatrix} by letting $a\to \infty$.

\subsection{Second proof of lower estimate}
Let $\Omega$ be an $N$-dimensional complex manifold. The \emph{Azukawa metric} on $\Omega$ is defined by
$$
A_\Omega(p;v)=\limsup_{t\to 0}\dfrac{\exp(g_{\Omega}(p,\varphi(t)))}{|t|}
$$
for $v\in T_p\Omega$ and $\varphi\colon \{|t|<\epsilon\}\to \Omega$ is a holomorphic map with $\varphi(0)=p$ and $\varphi_*(\partial/\partial t\mid_{t=0})=v$ (cf. \cite[\S2]{MR879385}).

\begin{lemma}
\label{lem:Azukawa=Teichmuller}
The Azukawa metric on $\teich_{g,m}$ coincides with the Teichm\"uller metric.
\end{lemma}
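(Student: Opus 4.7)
The plan is the following. By \eqref{eq:green_function}, $\exp(g_{\teich_{g,m}}(p,y))=k_0(p,y)=\tanh d_T(p,y)$, so the Azukawa quantity can be rewritten as
\begin{equation*}
A_{\teich_{g,m}}(p;v)=\limsup_{t\to 0}\frac{k_0(p,\varphi(t))}{|t|},
\end{equation*}
and the goal reduces to showing this limsup equals $F_p(v)$ for any admissible $\varphi$.

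I would first evaluate the limit on a canonical choice of $\varphi$ built from the Bers projection. Write $p=(M,f)$, pick $\mu\in L^\infty(M)$ representing $v=[\mu]$, and set $\varphi_0(t):=\Phi_p(t\mu)$. Then by \eqref{eq:Bers_projection_derivative}, $\varphi_0$ is holomorphic on $\{|t|<\|\mu\|_\infty^{-1}\}$ with $\varphi_0(0)=p$ and $\varphi_0'(0)=v$. For complex $t=|t|e^{i\theta}$, the rotated Beltrami differential $e^{i\theta}\mu$ has the same $L^\infty$-norm as $\mu$, represents the tangent vector $e^{i\theta}v$, and satisfies $F_p([e^{i\theta}\mu])=F_p(v)$ by the complex homogeneity of the Teichm\"uller Finsler norm. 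Applying Lemma \ref{lem:Gardiner_book} with the Beltrami differential $e^{i\theta}\mu$ and real parameter $|t|$ therefore yields
\begin{equation*}
\bigl|k_0(p,\varphi_0(t))-|t|F_p(v)\bigr|\le 4|t|^2\|\mu\|_\infty^2\qquad\bigl(|t|<(2\|\mu\|_\infty)^{-1}\bigr),
\end{equation*}
so $k_0(p,\varphi_0(t))/|t|\to F_p(v)$ as $t\to 0$. Hence the Azukawa value computed with $\varphi_0$ equals $F_p(v)$.

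To finish, I would verify the standard chart-independence of the Azukawa limsup, so that the same value is obtained for every admissible $\varphi$. In the Bers chart at $p$, any two admissible curves $\varphi$ and $\varphi_0$ share the $1$-jet $v$ at $0$, so $\varphi(t)-\varphi_0(t)=O(t^2)$ in the chart. Because $\exp g_{\teich_{g,m}}(p,\cdot)=\tanh d_T(p,\cdot)$ is locally Lipschitz near $p$ in the Bers chart (as follows from $d_T$ being continuous and locally equivalent to the Euclidean distance in charts), this gives $\exp g_{\teich_{g,m}}(p,\varphi(t))=\exp g_{\teich_{g,m}}(p,\varphi_0(t))+O(|t|^2)$, and dividing by $|t|$ then letting $t\to 0$ produces the same limit $F_p(v)$ for every admissible $\varphi$.

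The main obstacle is precisely this chart-independence step. A cleaner alternative route for the upper inequality uses distance-decreasing of the pluricomplex Green function: for any holomorphic $\varphi\colon\{|t|<R\}\to\teich_{g,m}$ with $\varphi(0)=p$ and $\varphi'(0)=v$, the composition $t\mapsto g_{\teich_{g,m}}(p,\varphi(t))$ is a negative plurisubharmonic function with logarithmic singularity at $0$, hence $g_{\teich_{g,m}}(p,\varphi(t))\le\log(|t|/R)$, so the limsup is $\le 1/R$; letting $R\uparrow 1/F_p(v)$, which is possible by the identification of the Teichm\"uller metric with the Kobayashi--Royden metric (\cite{MR0288254,MR0430319}), yields $A_{\teich_{g,m}}(p;v)\le F_p(v)$. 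Combined with the computation on $\varphi_0$, which gives the matching lower bound, this completes the proof.
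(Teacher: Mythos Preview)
Your proof is correct and follows essentially the same route as the paper: rewrite $\exp(g_{\teich_{g,m}}(p,\cdot))$ as $k_0(p,\cdot)$ via \eqref{eq:green_function} and then invoke Lemma \ref{lem:Gardiner_book} to obtain the first-order expansion $k_0(p,\varphi(t))=|t|F_p(v)+o(|t|)$. The paper's proof is terser---it applies Lemma \ref{lem:Gardiner_book} directly and takes the curve-independence of the Azukawa definition for granted from the cited reference---whereas you spell out that step (via the Lipschitz argument, and alternatively via the general Azukawa $\le$ Kobayashi inequality together with Royden's theorem); this extra care is sound but not a different strategy.
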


\begin{proof}
Let $x_0\in \teich_{g,m}$ and $v\in T_{x_0}\teich_{g,m}$. Let $\varphi\colon\{|t|<\epsilon\}\to \teich_{g,m}$ with $\varphi(0)=p$ and $\varphi_*(\partial/\partial t\mid_{t=0})=v$.
%Via the Ahlfors-Weill section, we may assume that there is a holomorphic map $\tilde{\varphi}\colon\{|t|<\epsilon\}\to B_{x_0}$ such that $\Phi_{x_0}\circ \tilde{\varphi}=\varphi$. 
By Lemma \ref{lem:Gardiner_book} and \eqref{eq:green_function},
$$
\exp(g_{\teich_{g,m}}(x_0,\varphi(t)))=
k_0(x_0,\varphi(t))=|t|F_{x_0}(v)+o(|t|),
$$
and $A_{\teich_{g,m}}(x_0;v)=F_{x_0}(v)$.
\end{proof}

Let us finish the second proof of the lower estimate in Theorem \ref{thm:Bergman_indicatrix}. In \cite{MR3318425}, B{\l}ocki and Zwonek proved that the Bergram kernel function is at least the reciprocal of the volume of the Azukawa indicatrix for pseudoconvex domains. This implies the desired estimate.

\begin{remark}
\label{rem:remark1}
The second proof can be applied for general situations. Indeed, the inequality \eqref{eq:Bergman_indicatrix} holds for pseudoconvex domains when the Busemann volume form is defined with the Azukawa metric instead of the Kobayashi (Teichm\"uller) metric.
\end{remark}

\subsection{An upper estimate of the Bergman kernel}
We fix a local chart $L\circ\beta_{x_0}$ on $\teich_{g,m}$ as \S\ref{subsec:1st_proof_lower}.
From Lemma \ref{lem:ball_A_2_indicatrix} and Nehari's theorem (cf. \cite{MR1215481}), 
\begin{equation}
\label{eq:comp_Indicatrix_Tg}
H_{x_0}^{-1}(\mathcal{I}_T(x_0))\subset 3\Bers{x_0},
\end{equation}
where $rE=\{r\psi\in A_2\mid \psi\in E\}$ for $E\subset A_2$.
Notice that $H_{x_0}^{-1}(\mathcal{I}_T(x_0))$ is balanced in the sense that $\lambda \psi\in H_{x_0}^{-1}(\mathcal{I}_T(x_0))$ for $\psi\in H_{x_0}^{-1}(\mathcal{I}_T(x_0))$ and $|\lambda|\le 1$. Furthermore, $H_{x_0}^{-1}(\mathcal{I}_T(x_0))$ is convex, and hence is pseudoconvex. Therefore
$$
K_{(1/3)H_{x_0}^{-1}(\mathcal{I}_T(x_0))}
=\dfrac{dV_E}{V_E((1/3)H_{x_0}^{-1}(\mathcal{I}_T(x_0)))}
$$
at the origin of $A_2$,
after recognizing the Bergman kernel form as a non-negative $(6g-6+2m)$-form
% on $\teich_g$
 (e.g. \cite{MR4011523}). From \eqref{eq:fundamental_eq3}
 and \eqref{eq:comp_Indicatrix_Tg}, we conclude
\begin{align*}
%\dfrac{i^{(3g-3+m)^2}}{2^{3g-3+m}}
K_{\teich_{g,m}}
&\le 
%\dfrac{i^{(3g-3+m)^2}}{2^{3g-3+m}}
K_{(1/3)H_{x_0}^{-1}(\mathcal{I}_T(x_0))}
=\dfrac{dV_E}{V_E((1/3)H_{x_0}^{-1}(\mathcal{I}_T(x_0)))}
\\
&=\dfrac{3^{6g-6+2m}dV_E}{V_E(H_{x_0}^{-1}(\mathcal{I}_T(x_0)))}
=\dfrac{3^{6g-6+2m}}{\epsilon_{6g-6+2m}}d\mu_{\teich_{g,m};B}
\end{align*}
at $x_0\in \teich_g$.
%which implies what we wanted.

\subsection{Proof of Theorem \ref{thm:main}}
\label{subsec:Masur-Veech_measure}
In \cite[Corollary 4.4]{MR3213829}, Dowdall, Duchin, and Masur observed that the pushforward measure $\mathbf{m}_g$ of the Masur-Veech measure via the projection $\mathcal{U}\teich_{g}\to \teich_g$ is comparable with the Busemann volume form. Therefore, we obtain the estimate in Theorem \ref{thm:main}.

\subsection{Hausdorff measure on $\teich_{g,m}$}
In \cite{MR3213829}, Dowdall, Duchin, and Masur also noticed that the Busemann volume form on $\teich_{g}$  associated to the Teichm\"uller metric coincides with the $(6g-6)$-dimensional Hausdorff measure $\mathcal{H}_{\teich_{g}}$ associated to the Teichm\"uller metric. This coincidence also holds for $\teich_{g,m}$ since Busemann proved that the Busemann volume coincides with the top-dimensional Hausdorff measure with respect to the Finsler distance for arbitrary (continuous) Finsler manifolds (cf. \cite{MR20626} and \cite[Theorem 3.23]{MR2132656}).

\begin{corollary}[Hausdorff measure]
\label{coro:hausdorff_measure}
%We have
$$
 \dfrac{1}{\epsilon_{6g-6+2m}}d\mathcal{H}_{\teich_{g,m}}
 \le 
% \dfrac{i^{(3g-3+m)^2}}{2^{3g-3+m}}
K_{\teich_{g,m}}\le \dfrac{3^{6g-6+2m}}{\epsilon_{6g-6+2m}}d\mathcal{H}_{\teich_{g,m}}
$$
on $\teich_{g,m}$.
\end{corollary}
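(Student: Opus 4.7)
The plan is to deduce the corollary by composing Theorem \ref{thm:Bergman_indicatrix} with an identification between the Busemann volume form and the top-dimensional Hausdorff measure associated to the Teichm\"uller distance. Concretely, I would invoke Theorem \ref{thm:Bergman_indicatrix} to get
\begin{equation*}
\dfrac{1}{\epsilon_{6g-6+2m}}d\mu_{\teich_{g,m};B}\le K_{\teich_{g,m}}\le \dfrac{3^{6g-6+2m}}{\epsilon_{6g-6+2m}}d\mu_{\teich_{g,m};B}
\end{equation*}
on $\teich_{g,m}$, and then rewrite the dominating and dominated Busemann forms in terms of $d\mathcal{H}_{\teich_{g,m}}$.

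The key input for this rewriting is Busemann's theorem: for any (continuous) Finsler manifold $(M,F)$, the Busemann volume form $d\mu_{M;B}$ coincides with the top-dimensional Hausdorff measure $d\mathcal{H}_M$ associated to the length distance induced by $F$ (as recalled in the paragraph preceding the corollary, with references \cite{MR20626} and \cite[Theorem 3.23]{MR2132656}). Because the Teichm\"uller metric $F_x$ is a continuous Finsler metric on $\teich_{g,m}$ whose induced length distance is precisely the Teichm\"uller distance $d_T$ (cf. \S\ref{subsec:teich_distance} and \cite[\S7]{MR903027}), the hypotheses of Busemann's theorem are satisfied, so $d\mu_{\teich_{g,m};B}=d\mathcal{H}_{\teich_{g,m}}$ on $\teich_{g,m}$. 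Substituting this equality into the double inequality above yields the desired estimate.

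There is no real obstacle here beyond verifying the applicability of Busemann's theorem to the (merely continuous) Finsler structure on $\teich_{g,m}$; once this identification is in hand, the corollary is a formal consequence of Theorem \ref{thm:Bergman_indicatrix}. The only point worth emphasizing in the write-up is that Busemann's result does not require smoothness of the Finsler norm, so it applies to the Teichm\"uller metric $F_x$ even though $F_x$ is in general not smooth.
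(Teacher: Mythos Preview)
Your proposal is correct and matches the paper's own argument: the corollary is obtained precisely by combining Theorem \ref{thm:Bergman_indicatrix} with Busemann's identification $d\mu_{\teich_{g,m};B}=d\mathcal{H}_{\teich_{g,m}}$ for continuous Finsler manifolds, as explained in the paragraph preceding the statement.
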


%Since $\mathcal{H}_{\teich_g}$ is invariant under the isometries of $d_T$,
%the measure $\mathcal{H}_{\teich_g}$ descends to a measure $\mathcal{H}_{\mathcal{M}_g}$ on the moduli space $\mathcal{M}_g$.
%From Corollary \ref{coro:finite_volume}, we obtain
%\begin{corollary}[Finite Hausdorff measure]
%For $g\ge 2$, 
%$$
%\mathcal{H}_{\mathcal{M}_g}(\mathcal{M}_g)<\infty.
%$$
%\end{corollary}

\begin{remark}
The coincidence $\mu_{\Omega;B}=\mathcal{H}_\Omega$ for the Kobayashi distance is 
also directly observed by Bland and Graham \cite[Theorem 1]{MR848839} for Kobayashi hyperbolic manifolds $\Omega$ with continuous infinitesimal Kobayashi-Royden metrics whose Kobayashi-indicatrices are convex. 
(In our case, the Teichm\"uller metric $F_{x_0}$ is a norm on the tangent space.)
\end{remark}

\begin{remark}
As remarked in Remark \ref{rem:remark1}, the inequality in Corollary \ref{coro:hausdorff_measure} also holds for pseudoconvex domains with the continuous Azukawa metrics since the coincidence $\mu_{\Omega;B}=\mathcal{H}_{\Omega}$ holds for continuous Finsler manifolds $\Omega$.
\end{remark}
\bibliographystyle{plain}
\bibliography{References-1}

\def\cprime{$'$} \def\cprime{$'$}
\begin{thebibliography}{10}

\bibitem{MR148896}
L.~Ahlfors and G.~Weill.
\newblock A uniqueness theorem for {B}eltrami equations.
\newblock {\em Proc. Amer. Math. Soc.}, 13:975--978, 1962.

\bibitem{MR2132656}
J.~C. \'{A}lvarez Paiva and A.~C. Thompson.
\newblock Volumes on normed and {F}insler spaces.
\newblock In {\em A sampler of {R}iemann-{F}insler geometry}, volume~50 of {\em
  Math. Sci. Res. Inst. Publ.}, pages 1--48. Cambridge Univ. Press, Cambridge,
  2004.

\bibitem{MR2913101}
Jayadev Athreya, Alexander Bufetov, Alex Eskin, and Maryam Mirzakhani.
\newblock Lattice point asymptotics and volume growth on {T}eichm{\"u}ller
  space.
\newblock {\em Duke Math. J.}, 161(6):1055--1111, 2012.

\bibitem{MR879385}
Kazuo Azukawa.
\newblock The invariant pseudometric related to negative plurisubharmonic
  functions.
\newblock {\em Kodai Math. J.}, 10(1):83--92, 1987.

\bibitem{MR0111834}
Lipman Bers.
\newblock Simultaneous uniformization.
\newblock {\em Bull. Amer. Math. Soc.}, 66:94--97, 1960.

\bibitem{MR0130972}
Lipman Bers.
\newblock Correction to ``{S}paces of {R}iemann surfaces as bounded domains''.
\newblock {\em Bull. Amer. Math. Soc.}, 67:465--466, 1961.

\bibitem{MR848839}
J.~Bland and Ian Graham.
\newblock On the {H}ausdorff measures associated to the {C}arath\'{e}odory and
  {K}obayashi metrics.
\newblock {\em Ann. Scuola Norm. Sup. Pisa Cl. Sci. (4)}, 12(4):503--514
  (1986), 1985.

\bibitem{MR3364678}
Zbigniew B{\l}ocki.
\newblock {A} lower bound for the {B}ergman kernel and the {B}ourgain-{M}ilman
  inequality.
\newblock In {\em Geometric aspects of functional analysis}, volume 2116 of
  {\em Lecture Notes in Math.}, pages 53--63. Springer, Cham, 2014.

\bibitem{MR3318425}
Zbigniew B{\l}ocki and W{\l}odzimierz Zwonek.
\newblock Estimates for the {B}ergman kernel and the multidimensional {S}uita
  conjecture.
\newblock {\em New York J. Math.}, 21:151--161, 2015.

\bibitem{MR20626}
Herbert Busemann.
\newblock Intrinsic area.
\newblock {\em Ann. of Math. (2)}, 48:234--267, 1947.

\bibitem{MR2106264}
Bo-Yong Chen.
\newblock The {B}ergman metric on {T}eichm\"{u}ller space.
\newblock {\em Internat. J. Math.}, 15(10):1085--1091, 2004.

\bibitem{MR1033739}
Se\'{a}n Dineen.
\newblock {\em The {S}chwarz lemma}.
\newblock Oxford Mathematical Monographs. The Clarendon Press, Oxford
  University Press, New York, 1989.
\newblock Oxford Science Publications.

\bibitem{MR3213829}
Spencer Dowdall, Moon Duchin, and Howard Masur.
\newblock Statistical hyperbolicity in {T}eichm{\"u}ller space.
\newblock {\em Geom. Funct. Anal.}, 24(3):748--795, 2014.

\bibitem{MR0430319}
Clifford~J. Earle and Irwin Kra.
\newblock On holomorphic mappings between {T}eichm{\"u}ller spaces.
\newblock pages 107--124, 1974.

\bibitem{MR903027}
Frederick~P. Gardiner.
\newblock {\em Teichm{\"u}ller theory and quadratic differentials}.
\newblock Pure and Applied Mathematics (New York). John Wiley \& Sons, Inc.,
  New York, 1987.
\newblock A Wiley-Interscience Publication.

\bibitem{MR1215481}
Yoichi Imayoshi and Masahiko Taniguchi.
\newblock {\em An introduction to {T}eichm{\"u}ller spaces}.
\newblock Springer-Verlag, Tokyo, 1992.

\bibitem{MR820321}
M.~Klimek.
\newblock Extremal plurisubharmonic functions and invariant pseudodistances.
\newblock {\em Bull. Soc. Math. France}, 113(2):231--240, 1985.

\bibitem{MR112162}
Shoshichi Kobayashi.
\newblock Geometry of bounded domains.
\newblock {\em Trans. Amer. Math. Soc.}, 92:267--290, 1959.

\bibitem{MR257351}
Irwin Kra.
\newblock On {T}eichm\"{u}ller spaces for finitely generated {F}uchsian groups.
\newblock {\em Amer. J. Math.}, 91:67--74, 1969.

\bibitem{MR1142683}
Samuel~L. Krushkal.
\newblock The {G}reen function of {T}eichm{\"u}ller spaces with applications.
\newblock {\em Bull. Amer. Math. Soc. (N.S.)}, 27(1):143--147, 1992.

\bibitem{MR660145}
L\'{a}szl\'{o} Lempert.
\newblock La m\'{e}trique de {K}obayashi et la repr\'{e}sentation des domaines
  sur la boule.
\newblock {\em Bull. Soc. Math. France}, 109(4):427--474, 1981.

\bibitem{MR644018}
Howard Masur.
\newblock Interval exchange transformations and measured foliations.
\newblock {\em Ann. of Math. (2)}, 115(1):169--200, 1982.

\bibitem{MR4028456}
Hideki Miyachi.
\newblock Pluripotential theory on {T}eichm\"{u}ller space {I}: {P}luricomplex
  {G}reen function.
\newblock {\em Conform. Geom. Dyn.}, 23:221--250, 2019.

\bibitem{MR4011523}
Nikolai Nikolov and Pascal~J. Thomas.
\newblock Comparison of the {B}ergman kernel and the
  {C}arath\'{e}odory-{E}isenman volume.
\newblock {\em Proc. Amer. Math. Soc.}, 147(11):4915--4919, 2019.

\bibitem{MR3887636}
Takeo Ohsawa.
\newblock {\em {$L^2$} approaches in several complex variables}.
\newblock Springer Monographs in Mathematics. Springer, Tokyo, 2018.
\newblock Towards the Oka-Cartan theory with precise bounds, Second edition of
  [ MR3443603].

\bibitem{MR0288254}
Halsey~L. Royden.
\newblock Automorphisms and isometries of {T}eichm{\"u}ller space.
\newblock In {\em Advances in the {T}heory of {R}iemann {S}urfaces ({P}roc.
  {C}onf., {S}tony {B}rook, {N}.{Y}., 1969)}, pages 369--383. Ann. of Math.
  Studies, No. 66. Princeton Univ. Press, Princeton, N.J., 1971.

\bibitem{MR367181}
Nobuyuki Suita.
\newblock Capacities and kernels on {R}iemann surfaces.
\newblock {\em Arch. Rational Mech. Anal.}, 46:212--217, 1972.

\bibitem{MR644019}
William~A. Veech.
\newblock Gauss measures for transformations on the space of interval exchange
  maps.
\newblock {\em Ann. of Math. (2)}, 115(1):201--242, 1982.

\end{thebibliography}

\end{document}